\documentclass[12pt,a4paper]{amsart}
\usepackage{amsfonts,amsmath,amssymb,amsthm,cmtiup} 
\usepackage{mathrsfs}
\usepackage{enumerate}

\newtheorem{theorem}{Theorem}
\newtheorem*{theorem*}{Theorem}
\newtheorem{lemma}{Lemma} 
 
\newtheorem{proposition}{Proposition}

\theoremstyle{definition} 
\newtheorem{remark}{Remark}
\newtheorem*{definition*}{Definition}

\newcommand{\zero}{{\mathbf 0}}

\begin{document}

{
\renewcommand*{\thefootnote}{$\star$}

\title[Maximally almost periodic subgroups]{Maximally almost periodic 
subgroups\\ of elementary Abelian topological groups$^\star$} \footnotetext[0]{This work was 
financially supported by the Russian Science Foundation, grant~22-11-00075-P.} 

}

\author{Ol'ga Sipacheva}

\email{ovsipa@gmail.com}

\address{Department of General Topology and Geometry, Faculty of Mechanics and  Mathematics, 
M.~V.~Lomonosov Moscow State University, Leninskie Gory 1, Moscow, 199991 Russia}

\begin{abstract}
It is proved that any infinite elementary Abelian topological group has an infinite maximally almost 
periodic subgroup. 
\end{abstract}

\keywords{elementary Abelian topological group, maximally almost periodic group, von~Neumann kernel, 
topologically independent set.}

\subjclass[2020]{22A05, 54H11}

\maketitle

This paper is concerned with elementary Abelian topological groups, i.e., Abelian topological groups in which all 
nonzero elements have the same prime order $p$. It is well known that, algebraically, any such group $G$ is a direct sum of copies of the cyclic 
group $\mathbb Z/p\mathbb Z$ of order $p$ (see, e.g., \cite[Theorem~8.5]{Fuchs}). Since $\mathbb Z/p\mathbb Z$ is 
a vector space over the field $\mathbb F_p$, it follows that so is $G$. Therefore, $G$ has a basis $E$, and every 
nonzero element of $G$ can be uniquely (up to the order of terms) represented as a linear combination of elements 
of $E$ with nonzero coefficients in $\mathbb Z/p\mathbb Z$; in other words, $G = \bigoplus_{e\in E}\langle 
e\rangle$, where $\langle e\rangle$ denotes the subgroup of $G$ generated by $e$ (this is the cyclic group of 
order $p$). Thus, on any elementary Abelian topological group $G$ with basis $E$, there exists the 
natural topology induced by the Tychonoff product topology of $\prod_{e\in E}\langle e\rangle$. In what follows, 
we refer to this topology as the \emph{product topology on~$G$ associated with~$E$}. We assume all topological 
groups under consideration to be Hausdorff. 

Although the algebraic structure an elementary Abelian group is simple, its topology may be very complex and 
strongly differ from a product topology. For example, it may happen that all continuous characters 
of such a group (that is, continuous homomorphisms to the circle group $\mathbb T$) are 
trivial~\cite[Example~5]{ZP}. Recall that the intersection of the kernels of continuous characters of a 
topological group $G$ is called the \emph{von~Neumann kernel} of $G$ and denoted by $n(G)$; a group $G$ 
with $n(G)= G$ is said to be \emph{minimally almost periodic} and a group $G$ with trivial $n(G)$ is said to be 
\emph{maximally almost periodic}~\cite{Neumann-Wigner}. 

\begin{remark}
\label{r1}
If an elementaty Abelian topological group $G$ has a basis $E$ such that the product topology associated 
with $E$ is coarser than the original topology of $G$ (or coincides with it), then $G$ is maximally almost 
periodic. Indeed, in this case, all projections $\pi_{e_0}\colon \bigoplus_{e\in E}\langle 
e\rangle \to \langle e_0\rangle$ onto the finite discrete factors $\langle e_0\rangle$, $e_0\in E$, 
are continuous (since they are continuous with respect to the Tychonoff product topology), and these are 
characters of~$G$, because every finite cyclic group $\langle e\rangle$ with the discrete topology is embedded in $\mathbb T$ as a 
topological subgroup. Clearly, these characters separate the points of $G$ and hence the intersection of their 
kernels is trivial. 

Note also that an elementary Abelian topological group $G$ of exponent $p$ is maximally almost periodic if and only if 
the intersection of open subgroups of finite index in $G$ is trivial, because the image of any nontrivial 
continuous character of $G$ is the discrete cyclic group 
$\mathbb Z/p\mathbb Z$ and its kernel is an open subgroup of $G$ of index~$p$. On the other hand, every 
open subgroup of finite index in $G$ is an intersection of open subgroups of index $p$, each of which is the 
kernel of a continuous character. 
\end{remark}

Example~5 of \cite{ZP} proves the existence of a Hausdorff minimally almost periodic elementary 
Abelian topological group of any prime exponent. However, such groups are never hereditarily minimally almost 
periodic: in this paper, we show that any infinite elementary Abelian topological group 
contains an infinite maximally almost periodic subgroup whose topology is finer than the 
product topology associated with some basis of this subgroup.  

In what follows, we use $\mathbb N$ and $\mathbb Z$ to denote the sets of positive integers and integers, 
respectively. The subgroup of a group $G$ generated by a set $A\subset G$ is denoted by $\langle A\rangle$. 
We assume that $\langle \varnothing\rangle$ is the trivial subgroup. Recall that a \emph{network} of a 
topological space $X$ is a family $\mathscr N$ of subsets of $X$ such that any open set in $X$ is the union 
of some elements of $\mathscr N$ (a network differs from a base in that its elements are not necessarily open). 
All notions related to topological and metric 
spaces can be found in~\cite{Engelking} and to topological groups, in~\cite{AT}. 

A \emph{norm} on a group $G$ with neutral element $e$ is a function $\lVert\boldsymbol\cdot\rVert\colon G\to 
\mathbb R$ with the following properties: 
\begin{enumerate} 
\item $\|g\|=0$ if and only if $g=e$; 
\item $\|g^{-1}\| = \|g\|$; 
\item $\|gh\|\le \|g\|+\|h\|$ (the triangle inequality for norms). 
\end{enumerate} 
We say that a topology of a topological group $G$ is \emph{generated by a norm} $\lVert \boldsymbol\cdot \rVert$ 
if the sets $\{x\in G: \|x\|< \varepsilon\}$, $\varepsilon > 0$, form a neighborhood base at $e$ for the topology 
of~$G$. 

\begin{definition*}
We say that a subset $X$ of a group $G$ with neutral element $e$ is \emph{independent} if $\langle A\rangle \cap 
\langle X\setminus A\rangle=\{e\}$ for any $A\subset X$. 
\end{definition*}

It is easy to see that, for an Abelian group $G$ with zero element $\zero$, a set $X\subset G$ is independent if, 
given any 
$n\in \mathbb N$, any pairwise distinct $x_1,\dots, x_n\in X$, and any integer $k_1,\dots, k_n$, we have 
$k_1 x_1=\dots= k_n x_n=\zero$ whenever $k_1 x_1+ \dots + k_n x_n=\zero$.   

\begin{definition*}
We say that a subset $X$ of a group $G$ is a \emph{basis} of $G$ if $X$ is independent and $\langle X\rangle =G$.
\end{definition*}

Clearly, given  is an elementary Abelian group $G$ of exponent $p$, a set $E\subset G$ is a basis of $G$ in the 
sense of this definition if and only if $E$ is a basis of $G$ treated as a vector space over the 
field~$\mathbb F_p$. 

Not every group has a basis, but when a basis exists, it is almost never unique. As mentioned above, any Abelian 
group $G$ of prime exponent does have a basis $E$ and every element $g$ of such a group can be written as a 
linear combination of elements of $E$ with nonzero coefficients in a unique way (up to permutation of terms). We 
refer to this linear combination as a \emph{decomposition} of $g$ in~$E$. 

\begin{definition*}
A subset $X$ of an Abelian topological group $G$ with zero element $\zero$ is said to be 
\emph{topologically independent} if, given any $n\in \mathbb N$, any pairwise distinct $x_1,\dots, 
x_n\in X$, any $k_1,\dots, k_n\in \mathbb Z$, and any neighborhood $U$ of $\zero$, there exists a neighborhood 
$V$ of $\zero$ such that $k_1 x_1, \dots, k_n x_n\in U$ whenever $k_1 x_1 +\dots +k_n x_n\in V$.
\end{definition*}

Note that any Tychonoff space $X$ is a basis of the free Abelian and free Boolean topological groups 
$A(X)$ and $B(X)$ (see, e.g., \cite{axioms}), but it is never topologically independent in these groups (unless $X$ is 
discrete). Moreover, no subspace $Y$ of $X$ with nondiscrete closure is topologically independent in 
$A(X)$ and $B(X)$. Indeed, let 
$G$ be any of the groups $A(X)$, and $B(X)$; we denote the zero element of $G$ by $\zero$. The subgroup $H$ of 
$G$ consisting of all words of even length on the alphabet $X$ is an open neighborhood of $\zero$, because this 
is the kernel of the continuous homomorphism $H\to \mathbb Z/2\mathbb Z$ extending the function 
$X\to \{0,1\}= \mathbb Z/2\mathbb Z$ defined by setting $x\mapsto 1$ for all $x\in X$. Let $U = H$, and let $y$ 
be any limit point of $Y$ in $X$. If $Y$ is topologically independent, then there exists a 
neighborhood $V$ of $\zero$ such that $u,v\in U$ for any $u,v\in Y$ satisfying the condition $u-v\in V$.  
Let $W$ be a neighborhood of $\zero$ for which $W-W\subset V$; then $(y+W)\cap X$ is a neighborhood of $y$ 
in $X$. For any distinct $x,z\in (y+W)\cap X$, we have $x-z\in W-W\subset V$, but $x$ and $-z$ do not belong 
to $U$, being words of odd length 1 on the alphabet $X$. Hence $Y$ is not topologically independent. 

\begin{proposition}
\label{p1}
Any countable metrizable elementary Abelian topological group contains an infinite topologically 
independent set. 
\end{proposition}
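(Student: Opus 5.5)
We first dispose of the trivial case and then, for a nondiscrete group, build a sequence $x_1,x_2,\dots$ in which each new element is much smaller than every nonzero element of the span of its predecessors. This forces a uniform estimate: a neighborhood $V$ that depends only on $U$, and not on the chosen elements or coefficients, as the definition requires.

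\emph{Discrete case.} If $G$ is discrete, we take any infinite basis $E$ (which exists since $G$ is infinite) and put $V=\{\zero\}$ for every $U$. Then $\sum k_ix_i\in V$ means $\sum k_ix_i=\zero$, and algebraic independence gives $k_ix_i=\zero\in U$. So we may assume that $G$ is nondiscrete.

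\emph{Norm and separation constants.} Since $G$ is metrizable and Abelian, its topology is generated by a norm $\lVert\cdot\rVert$ (Birkhoff--Kakutani; e.g.\ \cite{AT}). For a finite subgroup $H\subset G$, let $m(H)=\min\{\|h\|: h\in H\setminus\{\zero\}\}$, with $m(\{\zero\})=1$. We have $m(H)>0$ because $H$ is finite and $G$ is Hausdorff.

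\emph{Inductive construction.} Suppose $x_1,\dots,x_n$ are chosen and put $H_n=\langle x_1,\dots,x_n\rangle$, a finite group, with $H_0=\{\zero\}$. Set
\[
r_n=\min\bigl(m(H_n)/4,\ r_{n-1}/2\bigr),
\]
where $r_{-1}=\infty$ (so $r_0=m(H_0)/4$). Since $G$ is nondiscrete, the ball $\{x:\|x\|<r_n/p\}$ is infinite, so it contains some $x_{n+1}\notin H_n$. Then $\|kx_{n+1}\|\le |k|\,\|x_{n+1}\|<r_n$ for $0\le k<p$. The construction yields:
\begin{itemize}
\item $X=\{x_n\}$ is infinite and algebraically independent;
\item $r_n\downarrow 0$ with $r_{n+1}\le r_n/2$;
\item every nonzero element of $H_n$ has norm at least $4r_n$;
\item every nonzero multiple of $x_{n+1}$ has norm less than $r_n$.
\end{itemize}

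\emph{Key estimate.} Let $s=\sum_j k_jx_j$ be a finite combination with some $k_jx_j\neq\zero$, and let $i$ be the smallest index with $k_ix_i\ne\zero$. Then $k_ix_i\in H_i\setminus\{\zero\}$, so $\|k_ix_i\|\ge 4r_i$. The remaining terms satisfy
\[
\Bigl\|\sum_{j>i}k_jx_j\Bigr\|<\sum_{j>i}r_{j-1}\le r_i+r_i/2+\dots=2r_i .
\]
By the triangle inequality, $\|s\|>4r_i-2r_i=2r_i$. If all $k_jx_j$ vanish, then $s=\zero$ and the conclusion below holds trivially.

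\emph{Uniform choice of $V$.} Given $U$, choose $\varepsilon>0$ with $\{\|x\|<\varepsilon\}\subset U$, and then $N$ with $r_N<\varepsilon$. Put $V=\{x:\|x\|<2r_N\}$; it depends only on $U$. Suppose $s=\sum k_jx_j\in V$ with some term nonzero, and let $i$ be as above. Then $2r_i<\|s\|<2r_N$, so $r_i<r_N$. Since $(r_n)$ is nonincreasing, this forces $i>N$. Hence every nonzero term $k_jx_j$ has $j>N$, and so $\|k_jx_j\|<r_{j-1}\le r_N<\varepsilon$. Thus every $k_jx_j$ lies in $U$, and $X$ is topologically independent.

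\emph{Main obstacle.} The delicate step is making $V$ independent of the chosen elements and coefficients. This is handled by the strict two-sided bounds: the first nonzero term dominates the whole tail, so a small sum forces its first nonzero index to be large, and then all its terms are small.
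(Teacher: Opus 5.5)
Your proof is correct, but it takes a genuinely different route from the paper's.

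The paper enumerates a basis $e_1,e_2,\dots$ of the countable group and replaces it by a greedy norm-minimizing basis $E'$: each $e'_{n+1}$ has minimal norm among the elements whose decomposition has top index $n+1$. As a result, in any combination the term of \emph{largest} index has norm at most that of the whole combination (property~(iii)). Lemma~1 then peels terms off from the top, at the cost of factors of order $(2p)^k$. The independent set is $a_n=e'_{\max(g_n)}$ for a null sequence with $\|g_n\|<(4p)^{-n}$, and this decay is chosen to beat those factors in a head/tail splitting of a combination.

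You instead build the sequence directly, taking $x_{n+1}$ tiny relative to the separation constant $m(H_n)$ of the finite subgroup already generated. The control therefore comes from the opposite end: the term of \emph{smallest} index dominates the whole tail. This gives the single estimate $\|s\|>2r_i$, from which both algebraic independence and the uniform neighborhood $V=\{x:\|x\|<2r_N\}$ follow at once.

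What your argument buys:
\begin{itemize}
\item It is shorter and needs no special basis.
\item It avoids the constant-chasing of Lemma~1 and the head/tail balancing.
\item It never uses countability. It needs only a norm, nondiscreteness, finiteness of finitely generated subgroups, and $\|kx\|\le(p-1)\|x\|$, so it applies verbatim to every infinite metrizable elementary Abelian group.
\end{itemize}

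What the paper's approach buys is a norm-adapted basis $E'$ of the whole group. As a structural consequence, every sequence of elements of $E'$ with increasing indices and $\|a_n\|<(4p)^{-n}$ is topologically independent.
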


\begin{proof}
Let $G$ be a countable metrizable elementary Abelian topological group of prime exponent $p$ 
with zero element $\zero$, and let $E=\{e_1, e_2, \dots\}$ be any basis of $G$. For convenience, we will treat 
$G$ as a vector space over 
$\mathbb F_p$ and its elements as linear combinations of elements of~$E$ with coefficients in~$\mathbb F_p$. 

Since the group $G$ is first-countable, its topology is generated by a norm $\lVert\boldsymbol\cdot\rVert$ (see, 
e.g., \cite{Graev50}). Consider the set $E'=\{e'_1, e'_2, \dots\}$ defined recursively as follows: 
\begin{itemize} 
\item 
$e'_1= e_1$; 
\item 
if $n\in \mathbb N$ and $e'_1, e'_2, \dots, e'_{n}$ are already defined, then $e'_{n+1}$ is 
a linear combination $\lambda_1 e'_1 +\lambda_2 e'_2 + \dots+ \lambda_n e'_{n}+\lambda_{n+1} 
e_{n+1}$, where $\lambda_{n+1}\ne 0$, of minimum norm (if there are 
several linear combinations of minimum norm, then we take any of them). 
\end{itemize}

It is easy to see that $E'$ has the following properties:
\begin{enumerate}[{\rm(i)}]
\item
$\langle \{e'_1, \dots, e'_n\}\rangle =\langle \{e_1, \dots, e_n\}\rangle$ for each $n$;
\item
$E'$ is a basis in $G$;
\item
if $n_1<\dots <n_k$, then $\|e'_{n_k}\|\le \|\lambda_1 e'_{n_1}+\dots +\lambda_k e'_{n_k}\|$ for any $\lambda_1, 
\dots, \lambda_k\in \mathbb F_p$. 
\end{enumerate} 

Indeed, (i) is proved by simple induction: for every $n$, $e'_n$ can be represented as a linear combination 
of $e_1, \dots, e_n$ by construction. On the other hand, we have 
$$
e'_n=\lambda_1 e'_1+ \lambda_2e'_2+ \dots +\lambda_{n-1}e'_{n-1}+\lambda_n e_n,
$$ 
where $\lambda_n\ne 0$. Therefore, $(p-\lambda_n) e_n= 
\lambda_1 e'_1+ \lambda_2e'_2+ \dots +\lambda_{n-1}e'_{n-1}+(p-1)e'_n$ and hence 
$e_n$ can be represented as a linear combination of $e'_1, \dots, e'_{n}$. This easily implies~(i).

The set $E'$ is linearly independent, because the decomposition of every $e'_n$ in $E$  
contains $e_n$ with nonzero coefficient, which does not occur in the decompositions of $e'_k$ with $k<n$. Since 
$E$ is a basis of $G$, it follows from (i) that $E'$ spans $G$. This implies~(ii).

Property~(iii) follows directly from the definition of $e'_{n_k}$, because  
$\lambda_1 e'_{n_1}+\dots +\lambda_k e'_{n_k}$ equals a linear 
combination of $e_1, e_2, \dots, e_{n_k}$ (since so does each $e'_{n_i}$) and the coefficient of $e_{n_k}$ 
in this linear combination is nonzero (since $e_{n_k}$ occurs in the decomposition of $e'_{n_k}$ in 
$E$ with nonzero coefficient and does not occur in the decompositions of $e'_k$ with $k<n$). 

\begin{lemma}
\label{l1}
Let $w=\lambda_1 e'_{i_1} + \lambda_2 e'_{i_2} +\dots + \lambda_n e'_{i_n}$\textup, 
where $n, i_1, \dots, i_n\in \mathbb N$\textup, $i_1< i_2< \dots < i_n$\textup, and $\lambda_1, \dots, 
\lambda_n\in \mathbb F_p\setminus \{0\}$\textup. Then 
$$ 
\|\mu e'_{i_{n-k}}\|\le p^k\cdot 2^k\cdot \|w\|\qquad \text{for each}\quad k=0, \dots, n-1\quad \text{and 
any}\quad \mu\in \mathbb F_p. 
$$ 
\end{lemma}

\begin{proof}
We argue by induction on $k$. Note that, for any $\mu \in \mathbb F_p$ and $x\in G$, we have $\|\mu x\|\le p\cdot 
\|x\|$ by virtue of the triangle inequality for norms. Thus, the required inequality for $k=0$ follows from property (iii) of $E'$. Suppose that 
$0<l<n$ and the inequality holds for any $k<l$ and $\mu\in \mathbb F_p$. Applying the triangle inequality for 
norms, we obtain 
\begin{multline*}
\|(p-\lambda_{n-l+1})e'_{i_{n-l+1}}+(p-\lambda_{n-l+2})e'_{i_{n-l+2}}+\dots +(p-\lambda_{n})e'_{i_{n}}\| \\
\le p^{l-1}\cdot 2^{l-1}\cdot\|w\|+ p^{l-2}\cdot 2^{l-2}\cdot\|w\|+\dots+p^0\cdot 2^0\cdot\|w\|\\
\le p^{l-1}\cdot (2^{l-1}+2^{l-2}+\dots+1)\cdot\|w\|= p^{l-1}\cdot (2^l-1)\cdot \|w\|. 
\end{multline*}
Noting that 
\begin{multline*}
\lambda_1 e'_{i_{1}}+\lambda_2 e'_{i_{2}}+\dots +\lambda_l e'_{i_{l}}  \\ = w + 
(p-\lambda_{n-l+1})e'_{i_{n-l+1}}+(p-\lambda_{n-l+2})e'_{i_{n-l+2}}+\dots +(p-\lambda_{n})e'_{i_{n}}
\end{multline*}
and 
applying the triangle inequality once more, we arrive at 
$$
\|\lambda_1 e'_{i_{1}}+\lambda_2 e'_{i_{2}}+\dots +\lambda_l e'_{i_{l}}\|\le 
\|w\|+ p^{l-1}\cdot (2^l-1)\cdot \|w\| \le p^{l-1}\cdot 2^l\cdot\|w\|. 
$$
By property (iii) we have $\|e'_{i_{l}}\|\le p^{l-1}\cdot 2^l\cdot\|w\|$ and $\|\mu e'_{i_{l}}\|\le p^l \cdot 
2^l\cdot\|w\|$. 
\end{proof}

\begin{lemma}
\label{l2}
Each Cauchy sequence of pairwise distinct elements of $E'$ converges to~$\zero$.
\end{lemma}

\begin{proof}
Let $(e'_{n_k})_{k\in \mathbb N}$ be a Cauchy sequence of pairwise distinct elements. 
For any $\varepsilon >0$, there exists an $N\in \mathbb N$ such that 
$\|e'_{n_{k}}-e'_{n_N}\|< \varepsilon$ for any $k\ge N$. Without loss of generality, we may assume that 
$n_k>n_N$ for all $k> N$. But if $n_k > n_N$, then 
$\|e'_{n_k}\|\le \|e'_{n_k}-e'_{n_N}\|$ by property~(iii) of $E'$. Therefore, 
$\|e'_{n_k}\|<\varepsilon$ for all~$k > N$.
\end{proof}

Let $\max\colon G\to \{0\}\cup \mathbb N$ be the function defined by $\max(\zero) = 0$ and 
$$
\max (\lambda_1 e'_{n_1} + \dots + \lambda_k e'_{n_k}) = n_k
\quad \text{for $n_1< \dots < n_k$ and $\lambda_1, \dots, \lambda_k\in \mathbb F_p\setminus \{0\}$}.
$$
In particular, $\max(e'_n)=n$ for each $e'_n$.

If $G$ is discrete, then any independent set (in particular, any basis) in $G$ is topologically independent. 
Suppose that $G$ is nondiscrete. Then there exists a sequence $(g_n)_{n\in \mathbb N}$ of pairwise distinct 
elements of $g$ converging to $\zero$. We may assume without loss of generality that the sequence 
$\bigl(\max(g_n)\bigr)_{n\in \mathbb N}$ strictly increases and that $\|g_n\|<1/(4p)^n$. For each $n\in \mathbb 
N$, we set $k_n = \max (g_n)$ and $a_n=e'_{k_n}$. Then $\|a_n\|<1/(4p)^n$ for $n\in \mathbb N$ by property 
(iii). Let $\varepsilon >0$. Choose a positive integer $l$ for which $1/2^{l-1}<\varepsilon$ and consider 
any linear combination $w=\lambda_1 a_{n_1}+ \dots + \lambda_m a_{n_m}$, where $n_1<\dots < n_m$ 
(and hence $n_i\ge i$ for $i\le m$). We have $w = w'+ w''$, where 
$w'=\lambda_1 a_{n_1}+ \dots + \lambda_l a_{n_l}$ and $w''=\lambda_{l+1} a_{n_{l+1}}+ \dots + \lambda_m 
a_{n_m}$ (if $m\le l$, then $w''=\mathbf 0$ and possibly $\lambda_i=0$ for some $i\le l$). 
Clearly,  $\|w'\|\le \|w\|+\|(p-1)w''\|$ and 
$$ 
\|(p-1)w''\|\le p^2\cdot (\|a_{l+1}\|+\dots+\|a_m\|) < 1/(4p)^{l-1}, 
$$ 
whence $\|w'\|\le \|w\|+ 1/(4p)^{l-1}$. According to Lemma~\ref{l1}, $\|a_{n_i}\|\le (2p)^l\cdot 
\|w'\|$ for each $i\le l$. Therefore, if $\|w\|<1/(4p)^l$, then $\|a_{n_i}\|\le (2p)^l/(4p)^l + 
1/(4p)^{l-1} < 1/2^{l-1}$ for each $i\le l$. Moreover, by the definition of the sequence $(a_n)_{n\in 
\mathbb N}$, we have $\|a_{n_i}\|<1/(4p)^i<1/2^{l-1}$ for all $i>l$, $i\le m$. Thus, if 
$\|w\|<1/(4p)^l$, then $\|a_{n_i}\|<\varepsilon$ for all $i\le m$. 
This means that the set $\{a_n: n\in \mathbb N\}$ is topologically independent.
\end{proof}

\begin{theorem}
\label{t1}
Any infinite elementary Abelian topological group contains an infinite maximally almost periodic subgroup. 
\end{theorem}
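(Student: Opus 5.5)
Given an infinite elementary Abelian topological group $G$ of exponent $p$, I will first reduce to the countable metrizable case so that Proposition~\ref{p1} applies. Then I will show that the subgroup generated by the topologically independent set it produces carries a topology finer than the associated product topology. By Remark~\ref{r1}, that subgroup is then maximally almost periodic.

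\emph{Step 1: reduction.} I pick a countably infinite subgroup $H_0\le G$, for instance the span of countably many independent elements. The subspace topology on $H_0$ need not be metrizable, so I coarsen it. Remark~\ref{r1} gives a characterization through characters and open subgroups of finite index, so I should keep the witnesses inside $H$ itself. Every open subgroup of $H$ in the coarser topology is then open in the original topology, and Remark~\ref{r1} will still apply.

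To do the coarsening, I choose neighborhoods of zero in $G$ that separate the countably many nonzero points of $H_0$. For each nonzero $h\in H_0$ I take a neighborhood $U_h$ of $\zero$ with $h\notin U_h$. Using continuity of addition, I close this off countably to obtain a countable family $\mathcal U$ of neighborhoods of $\zero$ with three properties: for each $U\in\mathcal U$ there is $V\in\mathcal U$ with $V+V\subset U$ and $-V\subset U$; finite intersections stay in $\mathcal U$; and $\mathcal U$ contains all the $U_h$. The traces $U\cap H_0$, $U\in\mathcal U$, then form a base at $\zero$ of a Hausdorff group topology $\tau$ on $H_0$. Translation invariance is automatic in an Abelian group. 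The topology $\tau$ is first countable, hence metrizable, and it is coarser than the subspace topology.

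\emph{Step 2: apply Proposition~\ref{p1}.} The group $(H_0,\tau)$ is a countable metrizable elementary Abelian topological group. Proposition~\ref{p1} gives an infinite topologically independent set $A=\{a_n\}\subset H_0$. In the proof $A$ is a subset of the basis $E'$, so $A$ is also algebraically independent. I set $H=\langle A\rangle$, which has basis $A$.

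\emph{Step 3: $H$ is finer than the product topology.} A subbasic product neighborhood of $\zero$ in $H$ is $\ker\pi_{a_i}$ for a coordinate projection onto $\langle a_i\rangle$. I must show it contains a $\tau$-neighborhood of $\zero$.

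By Hausdorffness there is a $\tau$-neighborhood $U$ of $\zero$ containing none of the finitely many points $\mu a_i$ with $\mu\ne0$. Topological independence gives a neighborhood $V$ such that whenever $w=\sum \lambda_j a_{n_j}\in V$, every term $\lambda_j a_{n_j}$ lies in $U$. In particular the $a_i$-coefficient of $w$ is zero, so $V\cap H\subset\ker\pi_{a_i}$.

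One subtlety is that the definition of topological independence fixes the elements $x_1,\dots,x_n$ before choosing $V$. I will therefore use the uniform version that the proof of Proposition~\ref{p1} actually establishes: a single $V$ (namely $\|w\|<1/(4p)^l$) works for all finite combinations. This gives continuity of each $\pi_{a_i}$ on $(H,\tau)$, hence on $H$ with the original topology.

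\emph{Conclusion and main obstacle.} By Remark~\ref{r1}, the maximal almost periodicity of $H$ follows. The main obstacle is making the uniformity in Step~3 legitimate. I expect the needed uniform statement to be extracted directly from the final argument of the proof of Proposition~\ref{p1}, since the bound there does not depend on $m$ or the indices $n_j$. The other delicate point is the countable closure construction in Step~1, which is routine bookkeeping.
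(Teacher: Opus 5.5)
Your proof is correct and follows the paper's route: take a countable subgroup, coarsen its topology to a metrizable group topology, apply Proposition~\ref{p1}, and show that the coordinate projections $\pi_{a_i}$ are continuous so that Remark~\ref{r1} applies; the only difference is that you prove inline the two facts the paper cites, namely the coarsening step (the paper uses \cite[Theorem~2]{Arhangelskii79}) and \cite[Proposition~4.7]{DSS}. You are right to insist on the uniform form of topological independence, with one $V$ for all finite combinations: the definition as printed lets $V$ depend on $x_1,\dots,x_n$, and in a group of prime exponent that reduces to algebraic independence, since $\langle x_1,\dots,x_n\rangle$ is finite and hence discrete; the uniform form is what \cite{DSS} assumes and what the last paragraph of the proof of Proposition~\ref{p1} actually establishes.
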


\begin{proof}
Let $G$ be an infinite elementary Abelian topological group of prime exponent $p$ with zero element $\zero$, 
and let $H$ be 
an arbitrary countable subgroup of $G$. Any countable topological space has a countable network (consisting of 
singletons), and 
any Hausdorff topological group with a countable network admits a not strictly coarser second-countable Hausdorff group 
topology \cite[Theorem~2]{Arhangelskii79}. Let $\tau$ be a second-countable Hausdorff group topology on $H$ 
coarser than the induced topology. By Proposition~\ref{p1} \,$(H,\tau)$ contains an infinite topologically 
independent set $A=\{a_n: n\in \mathbb N\}$, where $a_i\ne a_j$ for $i\ne j$. Since $A$ is independent, it 
follows that the subgroup $\langle A\rangle$ of $G$ generated by $A$ is isomorphic to the direct sum 
$\bigoplus_{n\in \mathbb N} \langle a_n\rangle\subset \prod_{n\in \mathbb N}\langle a_n\rangle$ of the finite 
groups $\langle a_n\rangle$. According to \cite[Proposition~4.7]{DSS}, the product topology on $\langle A\rangle$ 
associated with $A$ is coarser than the topology induced from $(H,\tau)$; hence it is coarser than that induced 
from~$G$. According to Remark~\ref{r1}, $\langle A\rangle$ is maximally almost periodic. 
\end{proof}

As mentioned, any elementary Abelian topological group $G$ of exponent $p$ carries a natural structure of a 
topological vector space over the field $\mathbb F_p$ and the projection onto any one-dimensional subspace 
of this space is a character of $G$, however, not necessarily continuous; moreover, it may happen that 
no notrivial character of $G$ and hence no projection of the corresponding vector space onto a one-dimensional 
subspace is is continuous \cite[Example~5]{ZP}. Nevertheless, the following theorem holds. 

\begin{theorem}
\label{t2}
Any infinite-dimensional topological vector space $V$ over the field $\mathbb F_p$, where $p$ is any prime, has 
an infinite-dimensional subspace $W$ whose induced topology is finer than the product topology associated with 
some basis $E$ of $W$; therefore, all projections of $W$ onto its one-dimensional subspaces $\langle e\rangle$, 
$e\in E$, are continuous. 
\end{theorem}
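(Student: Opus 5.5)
The plan is to reduce Theorem~\ref{t2} to the argument already given for Theorem~\ref{t1}, since a topological vector space over $\mathbb F_p$ is exactly an elementary Abelian topological group of exponent $p$. Here the scalar multiplication by elements of $\mathbb F_p$ is automatically continuous, being repeated addition. The subgroups are precisely the subspaces, and a group basis is precisely a vector-space basis. I take Hausdorffness for granted, as the paper does for all topological groups.

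First, since $V$ is infinite-dimensional, I choose a countably infinite linearly independent set in $V$ and let $H$ be its span, a countable infinite subgroup. Arguing as in the proof of Theorem~\ref{t1}, $H$ has a countable network, so \cite[Theorem~2]{Arhangelskii79} supplies a second-countable Hausdorff group topology $\tau$ on $H$ coarser than the induced one. Then $(H,\tau)$ is a countable metrizable elementary Abelian group. By Proposition~\ref{p1} it contains an infinite topologically independent set $A=\{a_n\colon n\in\mathbb N\}$ of pairwise distinct elements.

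Next I set $W=\langle A\rangle$ and $E=A$. Topological independence includes algebraic independence: if $\sum k_ia_i=\zero$, then the sum lies in every $V$, so each $k_ia_i$ lies in every neighborhood $U$ of $\zero$, and Hausdorffness gives $k_ia_i=\zero$. Hence $E$ is a basis of the infinite-dimensional subspace $W$. By \cite[Proposition~4.7]{DSS}, the product topology on $W$ associated with $E$ is coarser than the topology induced from $(H,\tau)$, and therefore coarser than the topology induced from $V$.

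The main obstacle is exactly this comparison of topologies. If I had to verify it directly rather than cite \cite{DSS}, I would do the following. A basic product neighborhood of $\zero$ in $W$ is $\{w\colon \pi_{a_1}(w)=\dots=\pi_{a_N}(w)=0\}$. I apply topological independence with $U$ a neighborhood of $\zero$ that misses the finitely many nonzero multiples $\mu a_i$, $i\le N$. Then each of those coordinate components of a sum lying in the resulting $V$ must be $\zero$, because the components with indices $i\le N$ lie in $U$. The subtle point is that the definition fixes the finitely many elements before choosing $V$. I would handle this as in the estimate at the end of the proof of Proposition~\ref{p1}, which gives control uniform over all finite combinations, or simply invoke \cite{DSS}.

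Finally, each projection $\pi_e\colon W\to\langle e\rangle$ is continuous in the product topology, hence in the finer induced topology, which completes the proof.
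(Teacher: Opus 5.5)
Your proposal is correct and follows the paper's proof essentially step by step. Both take a countable subgroup $H$, use Arhangel'skii's coarser second-countable group topology $\tau$, apply Proposition~\ref{p1} to get an infinite topologically independent set, and invoke Proposition~4.7 of Dikranjan--Shakhmatov--Sp\v{e}v\'ak to compare the product topology with the induced one. Your extra care only fills in points the paper leaves implicit: you take $H$ infinite, you derive linear independence from topological independence via Hausdorffness, and you note that the comparison needs the version of topological independence that is uniform over all finite combinations (which is what the proof of Proposition~\ref{p1} actually establishes, unlike the literal definition stated in the paper).
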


\begin{proof}
Consider the additive group $G$ of $V$. Arguing as in the proof of Theorem~\ref{t1}, 
we take any countable subgroup $H$ of $G$ and find a Hausdorff second-countable group topology $\tau$ on $H$ 
coarser than that induced from $G$ and an infinite topologically independent set $E$ in $(H, \tau)$. Clearly, 
$E$ is a linearly independent subset of the vector space $V$ and the subgroup $\langle E\rangle$ of $H$ is 
the vector subspace of $V$ with basis $E$. Let us denote this subspace by $W$. Just as in the proof of 
Theorem~\ref{t1}, it follows from Proposition~4.7 of \cite{DSS} that the product topology on $W$ associated 
with $E$ is coarser than that induced from~$G$ (i.e., from $V$). All projections $\pi_e\colon W\to 
\langle e\rangle$, $e\in E$, are continuous with respect to the product topology, and each one-dimensional 
subspace $\langle e\rangle$ is finite and hence discrete in both the topology induced from $V$ and the product 
topology. Therefore, all projections $\pi_e$ are continuous with respect to the induced topology on~$W$. 
\end{proof}

\end{document}